\begin{document}

\newtheorem{theorem}{Theorem}    
\newtheorem{proposition}[theorem]{Proposition}
\newtheorem{conjecture}[theorem]{Conjecture}
\def\theconjecture{\unskip}
\newtheorem{corollary}[theorem]{Corollary}
\newtheorem{lemma}[theorem]{Lemma}
\newtheorem{sublemma}[theorem]{Sublemma}
\newtheorem{observation}[theorem]{Observation}
\theoremstyle{definition}
\newtheorem{definition}{Definition}
\newtheorem{notation}[definition]{Notation}
\newtheorem{remark}[definition]{Remark}
\newtheorem{question}[definition]{Question}
\newtheorem{questions}[definition]{Questions}
\newtheorem{example}[definition]{Example}
\newtheorem{problem}[definition]{Problem}
\newtheorem{exercise}[definition]{Exercise}

\numberwithin{theorem}{section} \numberwithin{definition}{section}
\numberwithin{equation}{section}

\def\earrow{{\mathbf e}}
\def\rarrow{{\mathbf r}}
\def\uarrow{{\mathbf u}}
\def\varrow{{\mathbf V}}
\def\tpar{T_{\rm par}}
\def\apar{A_{\rm par}}

\def\reals{{\mathbb R}}
\def\torus{{\mathbb T}}
\def\heis{{\mathbb H}}
\def\integers{{\mathbb Z}}
\def\naturals{{\mathbb N}}
\def\complex{{\mathbb C}\/}
\def\distance{\operatorname{distance}\,}
\def\support{\operatorname{support}\,}
\def\dist{\operatorname{dist}\,}
\def\Span{\operatorname{span}\,}
\def\degree{\operatorname{degree}\,}
\def\kernel{\operatorname{kernel}\,}
\def\dim{\operatorname{dim}\,}
\def\codim{\operatorname{codim}}
\def\trace{\operatorname{trace\,}}
\def\Span{\operatorname{span}\,}
\def\dimension{\operatorname{dimension}\,}
\def\codimension{\operatorname{codimension}\,}
\def\nullspace{\scriptk}
\def\kernel{\operatorname{Ker}}
\def\ZZ{ {\mathbb Z} }
\def\p{\partial}
\def\rp{{ ^{-1} }}
\def\Re{\operatorname{Re\,} }
\def\Im{\operatorname{Im\,} }
\def\ov{\overline}
\def\eps{\varepsilon}
\def\lt{L^2}
\def\diver{\operatorname{div}}
\def\curl{\operatorname{curl}}
\def\etta{\eta}
\newcommand{\norm}[1]{ \|  #1 \|}
\def\expect{\mathbb E}
\def\bull{$\bullet$\ }
\def\C{\mathbb{C}}
\def\R{\mathbb{R}}
\def\Rn{{\mathbb{R}^n}}
\def\Sn{{{S}^{n-1}}}
\def\M{\mathbb{M}}
\def\N{\mathbb{N}}
\def\Q{{\mathbb{Q}}}
\def\Z{\mathbb{Z}}
\def\F{\mathcal{F}}
\def\L{\mathcal{L}}
\def\S{\mathcal{S}}
\def\supp{\operatorname{supp}}
\def\dist{\operatorname{dist}}
\def\essi{\operatornamewithlimits{ess\,inf}}
\def\esss{\operatornamewithlimits{ess\,sup}}
\def\xone{x_1}
\def\xtwo{x_2}
\def\xq{x_2+x_1^2}
\def\gfz{\genfrac{}{}{0pt}{}}
\newcommand{\abr}[1]{ \langle  #1 \rangle}

\newcommand{\Norm}[1]{ \left\|  #1 \right\| }
\newcommand{\set}[1]{ \left\{ #1 \right\} }
\def\one{\mathbf 1}
\def\whole{\mathbf V}
\newcommand{\modulo}[2]{[#1]_{#2}}

\def\scriptf{{\mathcal F}}
\def\scriptg{{\mathcal G}}
\def\scriptm{{\mathcal M}}
\def\scriptb{{\mathcal B}}
\def\scriptc{{\mathcal C}}
\def\scriptt{{\mathcal T}}
\def\scripti{{\mathcal I}}
\def\scripte{{\mathcal E}}
\def\scriptv{{\mathcal V}}
\def\scriptw{{\mathcal W}}
\def\scriptu{{\mathcal U}}
\def\scriptS{{\mathcal S}}
\def\scripta{{\mathcal A}}
\def\scriptr{{\mathcal R}}
\def\scripto{{\mathcal O}}
\def\scripth{{\mathcal H}}
\def\scriptd{{\mathcal D}}
\def\scriptl{{\mathcal L}}
\def\scriptn{{\mathcal N}}
\def\scriptp{{\mathcal P}}
\def\scriptk{{\mathcal K}}
\def\frakv{{\mathfrak V}}

\title[Compactness of maximal commutators of bilinear operators]
{Compactness of maximal commutators of bilinear Calder\'{o}n-Zygmund singular integral operators}

\author[Yong Ding]{Yong Ding} 

\author[Ting Mei]{Ting Mei}

\author[Qingying Xue]{Qingying Xue}

%
\keywords{Bilinear maximal Calder\'{o}n-Zygmund singular integral, Iterated commutators, $CMO$ space, Compactness.}

%
\thanks {The first author is supported by NSFC (No.11371057) and  SRFDP (No.20130003110003), the third author is supported by NCET-13-0065.}

\address{Yong Ding\\School of Mathematical
Sciences\\Beijing Normal University\\Laboratory of Mathematics and
Complex Systems\\Ministry of Education\\Beijing, 100875\\
P. R. China}
\email{dingy@bnu.edu.cn}

\address{Ting Mei\\School of Mathematical
Sciences\\Beijing Normal University\\Laboratory of Mathematics and
Complex Systems\\Ministry of Education\\Beijing, 100875\\
P. R. China}
\email{meiting@mail.bnu.edu.cn}

\address{Qingying Xue\\School of Mathematical
Sciences\\Beijing Normal University\\Laboratory of Mathematics and
Complex Systems\\Ministry of Education\\Beijing, 100875\\
P. R. China
}
\email{qyxue@bnu.edu.cn}
\maketitle

\begin{abstract}
Let $T$ be a bilinear Calder\'{o}n-Zygmund singular integral operator and $T_*$ be its corresponding truncated maximal operator. The commutators in the $i$-$th$ entry and the iterated commutators of $T_*$ are defined by
$$
T_{\ast,b,1}(f,g)(x)=\sup_{\delta>0}\bigg|\iint_{|x-y|+|x-z|>\delta}K(x,y,z)(b(y)-b(x))f(y)g(z)dydz\bigg|,
$$
$$
T_{\ast,b,2}(f,g)(x)=\sup_{\delta>0}\bigg|\iint_{|x-y|+|x-z|>\delta}K(x,y,z)(b(z)-b(x))f(y)g(z)dydz\bigg|,
$$
\begin{align*}
T_{\ast,(b_1,b_2)}(f,g)(x)=\sup\limits_{\delta>0}\bigg|\iint_{|x-y|+|x-z|>\delta} K(x,y,z)(b_1(y)-b_1(x))(b_2(z)-b_2(x))f(y)g(z)dydz\bigg|.
\end{align*}
In this paper, the compactness of the commutators $T_{\ast,b,1}$, $T_{\ast,b,2}$ and $T_{\ast,(b_1,b_2)}$ on $L^r(\R^n)$ is established.
\end{abstract}
\section{Introduction and Main Results} 
\setcounter{equation}{0}
Let $T_\Omega$ be the well-known Calder\'{o}n-Zygmund singular integral operator defined by $$T_\Omega f(x)=p.v.\int_{\R^n} \frac{\Omega(x-y)}{|x-y|^n}f(y)\;dy.$$ In 1976, Coifman, Rochberg and Weiss \cite{CRW} defined the following well-known commutator of $T_\Omega$ for smooth functions,
\begin{equation}\label{T1}
[b,T_\Omega]f(x)=b(x)T_\Omega(f)(x)-T_\Omega(bf)(x)=p.v.\int_{
\mathbb{R}^n}(b(x)-b(y)) \frac{\Omega(x-y)}{|x-y|^n}
f(y)dy.
\end{equation}
The authors of \cite{CRW}
proved that $[b,T_\Omega]$ is  bounded on $L^p$ for $1<p<\infty$ when $b\in BMO$ and $\Omega$ satisfies:\\
\text{(i)} $\Omega(\lambda x)=\Omega(x)$, for any $\lambda>0$ and $x\neq 0$,\\
\text{(ii)} $\int_{S^{n-1}}\Omega(x^\prime)\;d\sigma(x^\prime)=0$,\\
\text{(iii)} $|\Omega(x^\prime)-\Omega(y^\prime)|\leq |x^\prime-y^\prime|$, for any $x^\prime,\,y^\prime\in S^{n-1}$.\\
The following characterization of $L^p$-compactness of $[b,T_\Omega]$ was given by Uchiyama in 1978.

\noindent\textbf{Theorem A} (\cite{AU}) Let $\Omega$ satisfy \text{(i)}-\text{(iii)}. If $b\in \cup_{q>1}L^q_{loc}(\R^n)$, then $[b,T_\Omega]$ is a compact operator on $L^p(\R^n)$ for $1<p<\infty$ if and only if $b\in CMO$ (the closure in $BMO(\R^n)$ of $C_c^\infty(\R^n)$).

In 1993, Beatrous and Li \cite{BL} gave the boundedness and the compactness characterizations for $[b,T_\Omega]$ in $L^p$ space and some spaces of homogeneous type. In 2001, Krantz and Li applied the compactness characterization of the commutator $[b,T_\Omega]$ to study Hankel type operators on Bergman spaces (see \cite{KL1},\cite{KL2}). As for the compactness of commutators for the other type operators, such as the Riesz potential, singular integral with variable kernel, parabolic singular integral, Littlewood-Paley operators, one may see \cite{WS} and the recent works \cite{CD1}-\cite{CDW3}.

Let $T$ be a bilinear Calder\'{o}n-Zygmund operator (see \cite{Gra1}) and assume that the kernel $K$ satisfies the usual conditions in such a theory, that is $K\in 2$-$CZK(A,\gamma)$. Let $T_*$ be the corresponding bilinear maximal singular integral operator defined by
\begin{align}\label{T*}
T_{\ast}(f,g)(x)=\sup_{\delta>0}\bigg|\iint_{|x-y|+|x-z|>\delta}K(x,y,z)f(y)g(z)\;dydz\bigg|.
\end{align}
In 2002, Grafakos and Torres \cite{GT1} obtained the following $L^p$-estimate of $T_*$.
\begin{align}\label{bdd of T_*}
\|T_*(f,g)\|_r\leq C\|f\|_p\|g\|_q,
\end{align}for $1<p,q<\infty$, $1/2<r<\infty$ with $\frac{1}{r}=\frac{1}{p}+\frac{1}{q}$.

Let $b,\,b_1,\,b_2\in BMO(\R^n)$. We are interested in the following three maximal commutators of bilinear Calder\'{o}n-Zygmund operators:
\begin{align*}
T_{\ast,b,1}(f,g)(x)=\sup_{\delta>0}|[T_\delta,b]_1(f,g)(x)|=\sup_{\delta>0}|(T_\delta(bf,g)-bT_\delta(f,g))(x)|,
\end{align*}
\begin{align*}
T_{\ast,b,2}(f,g)(x)=\sup_{\delta>0}|[T_\delta,b]_2(f,g)(x)|=\sup_{\delta>0}|(T_\delta(f,bg)-bT_\delta(f,g))(x)|,
\end{align*}
\begin{align*}
T_{\ast,(b_1,b_2)}(f,g)(x)=\sup_{\delta>0}|[[T_\delta,b_1]_1,b_2]_2(f,g)(x)|
=\sup_{\delta>0}|([T_\delta,b_1]_1(f,b_2g)-b_2[T_\delta,b_1]_1(f,g))(x)|,
\end{align*}where $T_\delta(f,g)(x)=\int\int_{|x-y|+|x-z|>\delta} K(x,y,z)f(y)g(z)\;dydz$.\\
Formally, they can take the form
\begin{align}\label{T*b 1}
T_{\ast,b,1}(f,g)(x)=\sup_{\delta>0}\bigg|\iint_{|x-y|+|x-z|>\delta} K(x,y,z)(b(y)-b(x))f(y)g(z)\;dydz\bigg|,
\end{align}
\begin{align}\label{T*b 2}
T_{\ast,b,2}(f,g)(x)=\sup_{\delta>0}\bigg|\iint_{|x-y|+|x-z|>\delta} K(x,y,z)(b(z)-b(x))f(y)g(z)\;dydz\bigg|,
\end{align}
\begin{align}\label{commutators of bilinear maximal singular integral operators}
&T_{\ast,(b_1,b_2)}(f,g)(x)\\
=&\sup_{\delta>0}\bigg|\iint_{|x-y|+|x-z|>\delta} K(x,y,z)(b_1(y)-b_1(x))(b_2(z)-b_2(x))f(y)g(z)\;dydz\bigg|\nonumber.
\end{align}
By the results in \cite{Xue}, the third operator maps $L^p(\R^n)\times L^q(\R^n)\rightarrow L^r(\R^n)$ with $\frac{1}{r}=\frac{1}{p}+\frac{1}{q}$ for all $\frac{1}{2}<r<\infty$, $1<p,\,q<\infty$, with the following estimate:
\begin{align}\label{bdd of T_{*,(b_1,b_2)}}
\|T_{\ast, (b_1,b_2)}(f,g)\|_{L^r(\R^n)}\leq C \prod_{j=1}^2 \|b_j\|_{BMO} \|f\|_{L^p(\R^n)}\|g\|_{L^q(\R^n)}.
\end{align}
\begin{remark}
 We can't find any results for the $L^p$-boundedness of the first two operators, but it is trival. We also can use the ideal in \cite{Xue}. Give the $L^p$-estimates of two maximal commutators controlling $T_{\ast, b,i}$ and obtain the following result.
\begin{align}\label{bdd of T_{*,b,i}}
\|T_{\ast, b,i}(f,g)\|_{L^r(\R^n)}\leq C\|b\|_{BMO} \|f\|_{L^p(\R^n)}\|g\|_{L^q(\R^n)},
\end{align} with $\frac{1}{r}=\frac{1}{p}+\frac{1}{q}$ for all $\frac{1}{2}<r<\infty$, $1<p,\,q<\infty$.
\end{remark}
Compare with the classical compact results in Theorem A, for the compactness of bilinear operators, recently, \'{A}rp\'{a}d B\'{e}nyi and R. H. Torres in \cite{BT0} first studied the compactness for commutators of bilinear Calder\'{o}n-Zygmund singular integral operators. \'{A}rp\'{a}d B\'{e}nyi et al. \cite{BT} also considered compactness properties of commutators of bilinear fractional integrals. Let us recall the definition of the compact bilinear operator (see \cite{BT0}).
\begin{definition}
Let $B_{r,X}=\{x\in X:\|x\|\leq r\}$ be the closed ball of radius $r$ centered at the origin in the normed space $X$. A bilinear operator $T: X\times Y\rightarrow Z$ is called compact if $T(B_{1,X}\times B_{1,Y})$ is precompact in $Z$.
\end{definition}

It is natural to ask whether the compact results still hold for the maximal commutators $T_{*,b,i}$, $T_{*,(b_1,b_2)}$ of the bilinear singular integral operators or not. We have found that there is no result for the compactness of the commutators of $T_{\ast}$ defined in (\ref{T*b 1}), (\ref{T*b 2}) and (\ref{commutators of bilinear maximal singular integral operators}), even in the classical linear case.

The main purpose of the present paper is to show the compactness for the maximal commutators $T_{\ast,b,1}$, $T_{\ast,b,2}$ and $T_{\ast,(b_1,b_2)}$ of bilinear Calder\'{o}n-Zygmund operators when the symbols $b,\,b_1,\,b_2\in CMO(\R^n)$, which denotes the closure of $C_c^\infty(\R^n)$ in the $BMO(\R^n)$ topology. Now, the difficulty lies in that $T_*$ is a sub-linear operator, we can't use the classical known method. The $L^p$-boundedness of the commutators of $T_*$ comes from the $L^p$-boundedness of two maximal commutators which control the commutators of $T_*$ (see \cite{Xue}). In fact, we can get the compact results of the sum of the two maximal commutators controlling the commutators of $T_*$, but from this we can't deduce the compact result for the commutators of $T_*$ (see also \cite{CD2}).

Our main results are as follows.
\begin{theorem}\label{thm compact of maximal singular integral 1}
Let $1\le r<\infty$, $1<p,\,q<\infty$ with $\frac{1}{r}=\frac{1}{p}+\frac{1}{q}$. For any $f\in L^p(\R^n),\,g\in L^q(\R^n)$, let $T_{\ast,b,1}(f,g)$, $T_{\ast,b,2}(f,g)$ be defined in \rm{(\ref{T*b 1})} and \rm{(\ref{T*b 2})}. If $b\in CMO(\R^n)$, then $T_{\ast,b,1}$ and $T_{\ast,b,2}$ are compact operators from $L^{p}(\R^n)\times L^{q}(\R^n)$ to $L^r(\R^n)$.
\end{theorem}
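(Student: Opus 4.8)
The plan is to reduce, by a density argument, to symbols $b\in C_c^\infty(\R^n)$, and then to verify the $L^r$ Riesz--Fr\'echet--Kolmogorov precompactness criterion for the family $\mathcal F=\{T_{\ast,b,1}(f,g):\|f\|_p\le1,\ \|g\|_q\le1\}$; the operator $T_{\ast,b,2}$ is handled in the same way after interchanging the two variables, so I only discuss $T_{\ast,b,1}$. For the reduction, note that for any $b,b'\in BMO(\R^n)$ and any $\delta>0$ one has $[T_\delta,b]_1-[T_\delta,b']_1=[T_\delta,b-b']_1$, so passing to absolute values and then to the supremum in $\delta$ gives the pointwise bound $|T_{\ast,b,1}(f,g)(x)-T_{\ast,b',1}(f,g)(x)|\le T_{\ast,b-b',1}(f,g)(x)$; together with \eqref{bdd of T_{*,b,i}} this yields $\|T_{\ast,b,1}(f,g)-T_{\ast,b',1}(f,g)\|_r\le C\|b-b'\|_{BMO}\|f\|_p\|g\|_q$. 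Since $CMO(\R^n)$ is by definition the $BMO$-closure of $C_c^\infty(\R^n)$, and since a limit, uniform on $B_{1,L^p}\times B_{1,L^q}$, of compact operators is again compact, it suffices to prove that $T_{\ast,b,1}$ is compact when $b\in C_c^\infty(\R^n)$; fix such a $b$ with $\supp b\subset B(0,\rho_0)$.

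For the precompactness of $\mathcal F$ one must check: (i) it is bounded in $L^r$; (ii) $\sup_{h\in\mathcal F}\|h\|_{L^r(|x|>R)}\to0$ as $R\to\infty$; (iii) $\sup_{h\in\mathcal F}\|h(\cdot+\tau)-h\|_r\to0$ as $\tau\to0$. Condition (i) is immediate from \eqref{bdd of T_{*,b,i}}. For (ii), when $|x|>2\rho_0$ we have $b(x)=0$ and the $y$-integration is confined to $B(0,\rho_0)$, so $|x-y|\ge|x|/2$; then the size estimate $|K(x,y,z)|\lesssim(|x-y|+|x-z|)^{-2n}\lesssim(|x|+|x-z|)^{-2n}$ together with H\"older's inequality in $z$ gives $T_{\ast,b,1}(f,g)(x)\lesssim\|b\|_\infty|B(0,\rho_0)|^{1/p'}\|f\|_p\|g\|_q\,|x|^{n/q'-2n}$, and since $(2n-n/q')r>n$ for $1\le r<\infty$ its $L^r$ norm over $\{|x|>R\}$ tends to $0$ uniformly over the unit balls.

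The heart of the matter is (iii). Writing $F_\delta(x)=\iint_{|x-y|+|x-z|>\delta}K(x,y,z)(b(y)-b(x))f(y)g(z)\,dydz$, so that $T_{\ast,b,1}(f,g)=\sup_\delta|F_\delta|$, one has $|T_{\ast,b,1}(f,g)(x+\tau)-T_{\ast,b,1}(f,g)(x)|\le\sup_\delta|F_\delta(x+\tau)-F_\delta(x)|$, and the difficulty is that this must be controlled \emph{uniformly in} $\delta$: because $T_*$ is only sublinear, we cannot move the translation through the operator. I would fix a small $\eta>0$ and split each truncation at level $\eta$, $F_\delta=\Phi^\eta+F^{\mathrm{near}}_\delta-R_\delta$, where $\Phi^\eta(f,g)(x)=\iint_{|x-y|+|x-z|>\eta}K(x,y,z)(b(y)-b(x))f(y)g(z)\,dydz$ carries no $\delta$, while $F^{\mathrm{near}}_\delta$ and $R_\delta$ are the integrals over $\{\delta<|x-y|+|x-z|\le\eta\}$ and $\{\eta<|x-y|+|x-z|\le\delta\}$ (only one of these is nonzero for a given $\delta$). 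Using the Lipschitz bound $|b(y)-b(x)|\le\|\nabla b\|_\infty|x-y|$, whence $|K(x,y,z)(b(y)-b(x))|\lesssim(|x-y|+|x-z|)^{-(2n-1)}$, a dyadic summation gives $|F^{\mathrm{near}}_\delta(x)|\lesssim\|\nabla b\|_\infty\,\eta\,Mf(x)Mg(x)$ uniformly in $\delta$ ($M$ the Hardy--Littlewood maximal operator); and $R_\delta(x+\tau)-R_\delta(x)$ decomposes into a ``matched interior'' part, which is dominated uniformly in $\delta$ by the integral over $\{|x-y|+|x-z|>\eta\}$, plus two thin annular shells, one of fixed radius $\eta$ and one of radius $\sim\delta$ and thickness $\sim|\tau|$. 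The crux is the $\delta$-shell: there the kernel bound $\lesssim\delta^{-(2n-1)}$ is exactly offset, after H\"older's inequality in the thin direction, by the measure of the shell, so that this term is bounded \emph{independently of $\delta$} and is small with $|\tau|$.

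It then remains to estimate, uniformly over $f,g$ in the unit balls: the $L^r$ norms of the near-diagonal and $\eta$-shell contributions, which by the above are $\lesssim\|\nabla b\|_\infty\big(\eta+|\tau|^a\eta^{-b}\big)\|f\|_p\|g\|_q$ for suitable $a,b>0$; and the translation difference of the fixed, non-maximal operator $\Phi^\eta$. For the latter one telescopes, over the set where both truncation conditions hold, $K(x+\tau,y,z)(b(y)-b(x+\tau))-K(x,y,z)(b(y)-b(x))=[K(x+\tau,y,z)-K(x,y,z)](b(y)-b(x+\tau))+K(x,y,z)(b(x)-b(x+\tau))$: the first summand is controlled by the H\"older regularity of $K$ (producing a gain $|\tau|^\gamma$, the singular region $\{|x-y|+|x-z|\le\eta\}$ having been removed), and the second by $|b(x)-b(x+\tau)|\le\|\nabla b\|_\infty|\tau|$ together with the fact that this factor vanishes unless $x\in B(0,\rho_0+1)$, so that only a local $L^r$ estimate with at worst a $\log(1/\eta)$ loss is needed; the residual boundary shell at radius $\eta$ is treated as before. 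Choosing first $\eta$ small and then $|\tau|$ small makes every piece arbitrarily small, uniformly in $f$ and $g$, which establishes (iii) and completes the proof. I expect the main obstacle to be precisely this uniformity in $\delta$ in step (iii): it is forced by the sublinearity of $T_*$, and it is overcome by the level-$\eta$ splitting together with the scaling cancellation on the $\delta$-shell of $R_\delta$; the remaining pieces are routine once $b\in C_c^\infty$, since then the extra factor $|x-y|$ (resp. $|\tau|$) coming from the smoothness of $b$ tames the singularity of the kernel near the diagonal.
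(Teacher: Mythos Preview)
Your reduction to $b\in C_c^\infty$ and the verification of (i) and (ii) are correct and essentially the same as in the paper. For (iii), however, your decomposition differs from the paper's: the paper does \emph{not} introduce an auxiliary level $\eta$, but instead splits the domain at the single scale $\epsilon^{-1}|h|$ tied to the translation, producing four pieces $J_1,\dots,J_4$. On the far region the factor $b(x)-b(x+h)$ pulls out and the remaining integral is a genuine truncation $T_{\max(\delta,\epsilon^{-1}|h|)}(f,g)(x)$, hence dominated pointwise by $T_*(f,g)(x)$; this is $J_1$. The characteristic--function mismatch $J_{22}$ is then a shell of \emph{relative} thickness $\epsilon$, and the paper's Lemma~2.2 gives the bound $\epsilon^{1/r_0'}M(|f|^{r_0})^{1/r_0}M(|g|^{r_0})^{1/r_0}$ uniformly in $\delta$.

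Your scheme has a genuine gap in the treatment of the summand $K(x,y,z)(b(x)-b(x+\tau))$, both in the matched interior of $R_\delta$ and in $\Phi^\eta$. You propose to take absolute values, localize $x$ to $B(0,\rho_0+1)$, and accept ``at worst a $\log(1/\eta)$ loss.'' But
\[
\iint_{|x-y|+|x-z|>\eta}\frac{|f(y)||g(z)|}{(|x-y|+|x-z|)^{2n}}\,dydz
\;\sim\;\sum_{j\ge0}\frac{1}{(2^j\eta)^{2n}}\iint_{|x-y|+|x-z|\le 2^{j+1}\eta}|f||g|
\]
is an infinite sum of comparable terms, so it is \emph{not} controlled by maximal functions, nor is its local $L^r$ norm finite in general: the divergence is at infinity, not at $\eta$, and localizing $x$ does not restrict $y,z$. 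The way out is precisely the paper's move: do not take absolute values of $K$; factor out $b(x)-b(x+\tau)$ and recognize the remaining integral as $T_{\eta}(f,g)(x)$ (up to a thin $\eta$--shell), hence $\le T_*(f,g)(x)$. Once you make this substitution your approach can be completed, but the paper's single split at $\epsilon^{-1}|h|$ achieves the same end without the two--parameter $(\eta,\tau)$ optimization and the extra boundary shells.
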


\begin{theorem}\label{thm compact of maximal singular integral 2}
Let $1\le r<\infty$, $1<p,\,q<\infty$ with $\frac{1}{r}=\frac{1}{p}+\frac{1}{q}$. For any $f\in L^p(\R^n),\,g\in L^q(\R^n)$, let $T_{\ast,(b_1,b_2)}(f,g)$ be defined in \rm{(\ref{commutators of bilinear maximal singular integral operators})}. If $b_1,\,b_2\in CMO(\R^n)$, then $T_{\ast, (b_1,b_2)}$ is a compact operator from $L^{p}(\R^n)\times L^{q}(\R^n)$ to $L^r(\R^n)$.
\end{theorem}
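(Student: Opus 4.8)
\section*{Proof proposal for Theorem \ref{thm compact of maximal singular integral 2}}

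The plan is to follow the now-standard three-step scheme for proving compactness of commutators in $CMO$, adapted to the maximal (sublinear) setting, and to reduce Theorem \ref{thm compact of maximal singular integral 2} to a uniform-approximation argument built on the boundedness estimate (\ref{bdd of T_{*,(b_1,b_2)}}). First I would reduce to the case $b_1,b_2\in C_c^\infty(\R^n)$: since $T_{\ast,(b_1,b_2)}$ depends bilinearly on the pair $(b_1,b_2)$ in a way that is controlled by $\|b_1\|_{BMO}\|b_2\|_{BMO}$ via (\ref{bdd of T_{*,(b_1,b_2)}}), a telescoping decomposition $b_1^{(1)}b_2^{(1)}-b_1^{(2)}b_2^{(2)} = (b_1^{(1)}-b_1^{(2)})b_2^{(1)} + b_1^{(2)}(b_2^{(1)}-b_2^{(2)})$ together with the sublinearity of the supremum shows that the operator norm of the difference of two such maximal commutators is bounded by $C(\|b_1^{(1)}-b_1^{(2)}\|_{BMO}\|b_2^{(1)}\|_{BMO} + \|b_1^{(2)}\|_{BMO}\|b_2^{(1)}-b_2^{(2)}\|_{BMO})$. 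Hence the set of pairs for which the corresponding maximal commutator is compact from $L^p\times L^q\to L^r$ is closed in $BMO\times BMO$, and it suffices to prove compactness when $b_1,b_2$ are smooth and compactly supported.

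Assuming $b_1,b_2\in C_c^\infty$, I would verify the three conditions of the bilinear Fréchet--Kolmogorov--Riesz compactness criterion (as formulated in \cite{BT0}): that $T_{\ast,(b_1,b_2)}(B_{1,L^p}\times B_{1,L^q})$ is (a) bounded in $L^r$, (b) uniformly decaying at infinity, and (c) uniformly equicontinuous under small translations. Part (a) is immediate from (\ref{bdd of T_{*,(b_1,b_2)}}). For (b), using that $b_1,b_2$ are supported in a fixed ball $B(0,R)$ and have bounded oscillation, the factors $(b_1(y)-b_1(x))(b_2(z)-b_2(x))$ force $x$ into a neighborhood of $B(0,R)$ or else produce a gain from the kernel size estimate $|K(x,y,z)|\lesssim (|x-y|+|x-z|)^{-2n}$ together with the mean-value bound $|b_i(y)-b_i(x)|\lesssim \min(\|b_i\|_\infty, \|\nabla b_i\|_\infty|x-y|)$; integrating this against $f,g$ and taking the supremum in $\delta$ (which only enlarges the region of integration, so Hölder still applies to the full integral) yields a tail estimate $\|T_{\ast,(b_1,b_2)}(f,g)\|_{L^r(|x|>\rho)}\to 0$ uniformly as $\rho\to\infty$.

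Part (c), the equicontinuity under translations, is the main obstacle, and it is exactly where the sublinearity of $T_\ast$ blocks the classical argument: one cannot write $\tau_h T_{\ast,(b_1,b_2)}(f,g) - T_{\ast,(b_1,b_2)}(f,g)$ as a commutator applied to a difference. Instead I would exploit $|\,|A|-|B|\,|\le |A-B|$ at the level of the supremum to dominate
\[
\bigl|\,T_{\ast,(b_1,b_2)}(f,g)(x+h)-T_{\ast,(b_1,b_2)}(f,g)(x)\,\bigr|
\le \sup_{\delta>0}\bigl|\,U_\delta(f,g)(x,h)\,\bigr|,
\]
where $U_\delta$ collects the difference of the two truncated integrands. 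Splitting the region $\{|x-y|+|x-z|>\delta\}$ near the singularity (where one uses $|b_i(\cdot)-b_i(x)|\lesssim |\nabla b_i|\,|x-\cdot|$ plus the kernel bound, making the integrand locally integrable with an $O(|h|^{\theta})$ modulus after interpolating the smoothness of $b_i$) from the region away from the singularity (where one uses the Hölder kernel regularity $|K(x',y,z)-K(x,y,z)|\lesssim |x-x'|^{\gamma}(|x-y|+|x-z|)^{-2n-\gamma}$, the smoothness of $b_i$, and the symmetric difference of the truncation sets $\{|x-y|+|x-z|>\delta\}\triangle\{|(x+h)-y|+|(x+h)-z|>\delta\}$, whose contribution is a thin shell of width $\sim|h|$), I expect to obtain a pointwise bound of the form $C\,|h|^{\theta}\,(M_1\otimes M_2)(f,g)(x) + (\text{good terms})$, uniformly in $\delta$, where the maximal-type majorant is $L^p\times L^q\to L^r$ bounded. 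Taking $L^r$ norms then gives $\|\tau_h T_{\ast,(b_1,b_2)}(f,g)-T_{\ast,(b_1,b_2)}(f,g)\|_r\le C|h|^{\theta}\|f\|_p\|g\|_q$, uniform over the unit balls. Combining (a)--(c) with the bilinear compactness criterion finishes the smooth case, and the closedness argument of the first paragraph then yields the full theorem for $b_1,b_2\in CMO$.
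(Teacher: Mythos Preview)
Your proposal is correct and follows essentially the same route as the paper: reduce to $b_1,b_2\in C_c^\infty$ via the telescoping identity and (\ref{bdd of T_{*,(b_1,b_2)}}), then verify the three Fr\'echet--Kolmogorov conditions, handling the translation equicontinuity through $\bigl|\sup_\delta|A_\delta|-\sup_\delta|B_\delta|\bigr|\le\sup_\delta|A_\delta-B_\delta|$ followed by a near/far split of the integration domain. The only refinements in the paper's execution of step~(c) are that the split is placed at scale $\epsilon^{-1}|h|$ for an auxiliary parameter $0<\epsilon<\tfrac14$ (so that the shell contribution from the symmetric difference of the truncation sets is $O(\epsilon^{1/r_0'})$ via (\ref{estimate 1})--(\ref{estimate 2}) and an $M(|f|^{r_0})^{1/r_0}M(|g|^{r_0})^{1/r_0}$ pointwise bound, rather than a direct power of $|h|$), and that the far-region term arising from $\vec b(x+h,\cdot,\cdot)-\vec b(x,\cdot,\cdot)$ is reduced, after the decomposition (\ref{decomposition of b}), to $|h|$ times $T_{\ast}$ and the single-entry maximal commutators $T_{*,b_j,i}$, whose boundedness is already in hand.
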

\remark Theorem \ref{thm compact of maximal singular integral 1}, \ref{thm compact of maximal singular integral 2} also hold for m-linear maximal Calder\'{o}n-Zygmund singular integral operators (including linear case $m=1$). The essential ideas in the proof are similar, of course, with more complicated and delicate division in the main steps of the proof in Section 2,3. For simplicity, we omit the proof.
\remark The compact results are new even for the linear case which can be proved by using similar ideas and steps as in the proof of Theorem \ref{thm compact of maximal singular integral 1}.

\section{The proof of Theorem \ref{thm compact of maximal singular integral 1}} 
\setcounter{equation}{0}
\noindent
In this part, we will give the proof of  Theorem \ref{thm compact of maximal singular integral 1}. We first give the following lemmas.
\begin{lemma}\label{Frechet Kolmogolov thm} \rm{(Frechet-Kolmogorov)}
\cite{YK}
A subset $G$ of $L^p(\R^n) (1\leq p<\infty)$ is strongly pre-compact if and only if $G$ satisfies the following conditions:\\
$(a)$ $\sup\limits_{f\in G} \|f\|_p <\infty$;\\
$(b)$ $\lim\limits_{\alpha\rightarrow \infty} \|f\chi_{E_\alpha}\|_p=0$, uniformly for $f\in G$, where $E_\alpha=\{x\in\R^n:|x|>\alpha\}$;\\
$(c)$ $\lim\limits_{|h|\rightarrow0} \|f(\cdot+h)-f(\cdot)\|_p=0$, uniformly for $f\in G$.
\end{lemma}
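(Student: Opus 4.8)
The statement is the classical Fr\'echet--Kolmogorov compactness criterion, so the plan is to prove the two implications separately. Since $L^p(\R^n)$ is complete, ``strongly pre-compact'' coincides with ``totally bounded'', and throughout I would work with finite $\eps$-nets.

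\emph{Necessity.} If $G$ is totally bounded then (a) is immediate, a totally bounded set being bounded. For (b) and (c), fix $\eps>0$ and choose a finite $\eps$-net $f_1,\dots,f_N\in G$. Each $f_j\in L^p$, so by absolute continuity of the integral there is $\alpha_j$ with $\|f_j\chi_{E_{\alpha_j}}\|_p<\eps$, and by continuity of translations on $L^p$ (a standard fact, proved first for $f\in C_c(\R^n)$ and then extended by density) there is $\delta_j>0$ with $\|f_j(\cdot+h)-f_j\|_p<\eps$ whenever $|h|<\delta_j$. Put $\alpha=\max_j\alpha_j$ and $\delta=\min_j\delta_j$. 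Given $f\in G$, pick $j$ with $\|f-f_j\|_p<\eps$; then $\|f\chi_{E_\alpha}\|_p\le\|f-f_j\|_p+\|f_j\chi_{E_\alpha}\|_p<2\eps$, and for $|h|<\delta$ one gets $\|f(\cdot+h)-f\|_p\le 2\|f-f_j\|_p+\|f_j(\cdot+h)-f_j\|_p<3\eps$. This gives (b) and (c).

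\emph{Sufficiency.} Assume (a)--(c) and set $M=\sup_{f\in G}\|f\|_p<\infty$. Fix $\eps>0$. Choose $\varphi\in C_c^\infty(\R^n)$ with $\varphi\ge0$, $\int\varphi=1$, $\supp\varphi\subset B(0,1)$, and let $\varphi_t(x)=t^{-n}\varphi(x/t)$. From $f*\varphi_t(x)-f(x)=\int\varphi_t(h)\big(f(x-h)-f(x)\big)\,dh$ and Minkowski's integral inequality, $\|f*\varphi_t-f\|_p\le\sup_{|h|\le t}\|f(\cdot-h)-f\|_p$, so by (c) I may fix $t>0$ with $\|f*\varphi_t-f\|_p<\eps$ for all $f\in G$. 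Since $\supp\varphi_t\subset B(0,t)$, for $|x|>R$ one has $(f*\varphi_t)(x)=\big((f\chi_{E_{R-t}})*\varphi_t\big)(x)$, hence $\|(f*\varphi_t)\chi_{E_R}\|_p\le\|f\chi_{E_{R-t}}\|_p$; by (b) I may fix $R>t$ so this is $<\eps$ for all $f\in G$. With $g_f:=(f*\varphi_t)\chi_{B(0,R)}$ this yields $\|f-g_f\|_p<2\eps$ for every $f\in G$. Finally, by H\"older's inequality $\|f*\varphi_t\|_\infty\le M\|\varphi_t\|_{p'}$ and $|(f*\varphi_t)(x)-(f*\varphi_t)(y)|\le M\|\varphi_t(x-\cdot)-\varphi_t(y-\cdot)\|_{p'}\le M\|\nabla\varphi_t\|_\infty\,|B(0,R+t)|^{1/p'}|x-y|$, so $\{f*\varphi_t:f\in G\}$ restricted to the compact set $\overline{B(0,R)}$ is uniformly bounded and equicontinuous; Arzel\`a--Ascoli furnishes a finite $\eta$-net in $C(\overline{B(0,R)})$, and since uniform closeness $\le\eta$ on $B(0,R)$ forces $L^p$-closeness $\le\eta|B(0,R)|^{1/p}$, choosing $\eta$ small gives a finite $\eps$-net for $\{g_f:f\in G\}$ in $L^p(\R^n)$. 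Hence $G$ is covered by finitely many $L^p$-balls of radius $3\eps$; letting $\eps\to0$ shows $G$ is totally bounded, i.e.\ strongly pre-compact.

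The only genuine difficulty is the bookkeeping with quantifiers in the sufficiency direction: the mollification/truncation parameters $t$, $R$, $\eta$ must be selected depending only on $\eps$ and on the \emph{uniform} bounds in (a)--(c), never on the individual $f\in G$ --- which is exactly what those hypotheses are designed to provide. The two ancillary ingredients, continuity of translation on $L^p$ and the Arzel\`a--Ascoli theorem, are standard and would just be cited.
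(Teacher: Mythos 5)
Your proof is correct: both directions are the standard Fr\'echet--Kolmogorov argument, with the quantifiers in the sufficiency step handled properly (the mollification scale $t$, the truncation radius $R$, and the Arzel\`a--Ascoli mesh $\eta$ depend only on $\varepsilon$ and the uniform hypotheses (a)--(c), not on the individual $f$). The paper itself does not prove this lemma at all --- it is quoted as a known result with a citation to Yosida's \emph{Functional Analysis} --- and the argument given there is essentially the one you wrote (Yosida uses averaging over balls where you use a smooth mollifier, an immaterial difference), so your proposal fills in exactly the standard proof the authors chose to omit.
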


\begin{lemma}
For any $\delta>0$, $0<\epsilon<\frac{1}{2}$, we have the following inequalities
\begin{align}\label{estimate 1}
\iint_{\frac{\delta}{1+2\epsilon}\leq|y|+|z|\leq \delta}\frac{dydz}{(|y|+|z|)^{2n}}\leq C[1-(1+2\epsilon)^{-n}],
\end{align}
\begin{align}\label{estimate 2}
\iint_{\delta\leq|y|+|z|\leq \frac{\delta}{1-2\epsilon}}\frac{dydz}{(|y|+|z|)^{2n}}\leq C[(1-2\epsilon)^{-n}-1],
\end{align}where the constant $C$ is independent of $\delta$ and $\epsilon$.
\end{lemma}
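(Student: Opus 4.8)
The plan is to prove the two integral estimates by polar-type coordinates in the product space $\R^n\times\R^n$, reducing both to elementary one-dimensional integrals. First I would introduce the substitution that separates the radial variable $t=|y|+|z|$ from the angular variables. Concretely, writing $y=\rho\,\omega$ and $z=\sigma\,\eta$ with $\omega,\eta\in S^{n-1}$ and $\rho,\sigma>0$, the measure becomes $dy\,dz=\rho^{n-1}\sigma^{n-1}\,d\rho\,d\sigma\,d\sigma(\omega)\,d\sigma(\eta)$; after integrating out $\omega,\eta$ this contributes only the constant $\omega_{n-1}^2$ (the square of the surface area of $S^{n-1}$), which gets absorbed into $C$. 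So both integrals reduce to
\begin{align*}
\iint_{a\le \rho+\sigma\le b}\frac{\rho^{n-1}\sigma^{n-1}}{(\rho+\sigma)^{2n}}\,d\rho\,d\sigma,
\end{align*}
integrated over the appropriate shell $\{a\le\rho+\sigma\le b\}$ in the first quadrant.

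Next I would perform the change of variables $\rho+\sigma=t$, $\rho=ts$ with $s\in(0,1)$, so $d\rho\,d\sigma=t\,ds\,dt$ and the integrand becomes
\begin{align*}
\frac{(ts)^{n-1}\bigl(t(1-s)\bigr)^{n-1}}{t^{2n}}\cdot t = \frac{s^{n-1}(1-s)^{n-1}}{t}.
\end{align*}
The $s$-integral $\int_0^1 s^{n-1}(1-s)^{n-1}\,ds$ is a finite Beta-function constant, again absorbed into $C$. Thus the left-hand side of each inequality is bounded by a constant times $\int_a^b \frac{dt}{t} = \log(b/a)$. For \eqref{estimate 1}, $a=\frac{\delta}{1+2\epsilon}$, $b=\delta$, giving $\log(1+2\epsilon)$; for \eqref{estimate 2}, $a=\delta$, $b=\frac{\delta}{1-2\epsilon}$, giving $\log\frac{1}{1-2\epsilon}=-\log(1-2\epsilon)$. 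In both cases the bound is independent of $\delta$, as required.

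It remains to reconcile these logarithmic bounds with the stated right-hand sides $C[1-(1+2\epsilon)^{-n}]$ and $C[(1-2\epsilon)^{-n}-1]$. This is purely a matter of comparing elementary functions on $0<\epsilon<\tfrac12$: for such $\epsilon$ one has $\log(1+2\epsilon)\le C\,[1-(1+2\epsilon)^{-n}]$ since both quantities are comparable to $\epsilon$ near $0$ and both stay bounded away from $0$ and $\infty$ on any closed subinterval, and likewise $-\log(1-2\epsilon)\le C\,[(1-2\epsilon)^{-n}-1]$ because $(1-2\epsilon)^{-n}-1\ge n\cdot\bigl(-\log(1-2\epsilon)\bigr)/\text{(something)}$ — more simply, writing $u=(1-2\epsilon)^{-1}\ge 1$, we need $n\log u\le C(u^n-1)$, which holds for all $u\ge1$ because $u^n-1\ge n(u-1)\ge n\log u$. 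An entirely analogous inequality handles the first case. I do not expect any genuine obstacle here; the only point requiring a little care is keeping track of which constants may legitimately depend on $n$ (all of them) versus on $\delta$ and $\epsilon$ (none of them), and checking the direction of each elementary inequality. The reason these particular right-hand sides are chosen, rather than the cleaner $C\epsilon$, is presumably that the forms $1-(1+2\epsilon)^{-n}$ and $(1-2\epsilon)^{-n}-1$ arise naturally when this lemma is later applied to split a truncated kernel difference into annular pieces, so matching that form now avoids a conversion step downstream.
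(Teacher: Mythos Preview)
Your proof is correct, but it follows a genuinely different route from the paper's.

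The paper proceeds by a direct case split on $|z|$: for the first inequality it breaks the shell into $\{|z|\le \delta/(1+2\epsilon)\}$ and $\{\delta/(1+2\epsilon)\le |z|\le \delta\}$, passes to spherical coordinates in the $y$-variable only, and computes the resulting one-dimensional radial integrals explicitly. The terms $1-(1+2\epsilon)^{-n}$ and $(1-2\epsilon)^{-n}-1$ fall out of that computation directly, with no further comparison needed.

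Your approach instead passes to full polar coordinates in both $y$ and $z$, then makes the $(t,s)=(\rho+\sigma,\rho/(\rho+\sigma))$ substitution to factor the integral as a Beta constant times $\int_a^b dt/t=\log(b/a)$. This is cleaner and in fact gives an exact evaluation of the left-hand side, but it produces a logarithmic bound rather than the stated one, so you need the additional elementary comparison step. That step is fine: your argument $u^n-1\ge n(u-1)\ge n\log u$ for $u\ge 1$ handles the second inequality on the whole range $0<\epsilon<1/2$; for the first inequality note that $u=1+2\epsilon$ stays in the compact interval $[1,2]$, so the ratio $\log u/(1-u^{-n})$ is continuous there with finite limit $1/n$ at $u=1$, hence bounded by a constant depending only on $n$.

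Either method works. The paper's has the advantage of outputting the right-hand side in exactly the form used later; yours is more systematic and makes the $\delta$-independence transparent from the start.
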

\begin{proof}
We first give the estimate for (\ref{estimate 1}). Simple computation and spherical coordinates transformations give that
\begin{align*}
&\iint_{\frac{\delta}{1+2\epsilon}\leq|y|+|z|\leq \delta}\frac{dydz}{(|y|+|z|)^{2n}}\\
\leq&\int_{|z|\leq\frac{\delta}{1+2\epsilon}}\int_{\frac{\delta}{1+2\epsilon}-|z|\leq|y|\leq \delta-|z|}\frac{dydz}{(|y|+|z|)^{2n}}
+\int_{\frac{\delta}{1+2\epsilon}\leq|z|\leq\delta}\int_{|y|\leq \delta-|z|}\frac{dydz}{(|y|+|z|)^{2n}}\\
\leq&C\int_{|z|\leq\frac{\delta}{1+2\epsilon}}\int_{\frac{\delta}{1+2\epsilon}-|z|}^{\delta-|z|}\frac{r^{n-1}}{(r+|z|)^{2n}}drdz
+C\int_{\frac{\delta}{1+2\epsilon}\leq|z|\leq\delta}\int_0^{\delta-|z|}\frac{r^{n-1}}{(r+|z|)^{2n}}drdz\\
\leq&C\int_{|z|\leq\frac{\delta}{1+2\epsilon}}[\delta^{-n}(1+2\epsilon)^{n}-\delta^{-n}]dz
+C\int_{\frac{\delta}{1+2\epsilon}\leq|z|\leq\delta}(|z|^{-n}-\delta^{-n})dz\\
\leq&C[1-(1+2\epsilon)^{-n}]+C\delta^{-n}[\delta^n-\delta^n(1+2\epsilon)^{-n}]\leq C[1-(1+2\epsilon)^{-n}].
\end{align*}
Analogously, we also can obtain (\ref{estimate 2}).
\begin{align*}
&\iint_{\delta\leq|y|+|z|\leq \frac{\delta}{1-2\epsilon}}\frac{dydz}{(|y|+|z|)^{2n}}\\
\leq&\int_{|z|\leq\delta}\int_{\delta-|z|\leq|y|\leq\frac{\delta}{1-2\epsilon}-|z|}\frac{dydz}{(|y|+|z|)^{2n}}
+\int_{\delta\leq|z|\leq \frac{\delta}{1-2\epsilon}}\int_{|y|\leq \frac{\delta}{1-2\epsilon}-|z|}\frac{dydz}{(|y|+|z|)^{2n}}\\
\leq&C\int_{|z|\leq\delta}[\delta^{-n}-\delta^{-n}(1-2\epsilon)^{n}]dz
+C\int_{\delta\leq|z|\leq\frac{\delta}{1-2\epsilon}}(|z|^{-n}-\delta^{-n}(1-2\epsilon)^n)dz\\
\leq&C[1-(1-2\epsilon)^{n}]+C\delta^{-n}[\delta^n(1-2\epsilon)^{-n}-\delta^n]\leq C[(1-2\epsilon)^{-n}-1].
\end{align*}
We complete the proof of the lemma.
\end{proof}

\noindent
{\textbf{\it Proof of Theorem \ref{thm compact of maximal singular integral 1}.}} We only prove $i=1$. Without loss of generality, let $B_1$, $B_2$ be unit balls in $L^p(\R^n)$ and $L^q(\R^n)$, respectively. We need to show the set $\{T_{*,b,1}(f,g): f\in B_1,\,g\in B_2\}$ is strongly pre-compact in  $L^r(\R^n)$ with $b\in CMO(\R^n)$.

We first show that if the set $\{T_{*,b,1}(f,g): f\in B_1,\,g\in B_2\}$ is strongly pre-compact in $L^r(\R^n)$ for $b\in C_c^\infty(\R^n)$, then the set $\{T_{*,b,1}(f,g): f\in B_1,\,g\in B_2\}$ is also strongly pre-compact in $L^r(\R^n)$ for $b\in CMO$. In fact, suppose that $b\in CMO$, then for any $\epsilon>0$, there exists $b^\epsilon\in C_c^\infty(\R^n)$ such that $\|b-b^\epsilon\|_{BMO}<\epsilon$. It is easy to see that
\begin{align*}
|&T_{*,b,1}(f,g)(x)-T_{*,b^\epsilon,1}(f,g)(x)|\\
\leq&\sup_{\delta>0}\bigg|\iint_{|x-y|+|x-z|>\delta} \bigg[(b(y)-b(x))-(b^\epsilon(y)-b^\epsilon(x)) \bigg]K(x,y,z)f(y)g(z)\;dydz\bigg|\\
\leq &T_{*,b-b^\epsilon,1}(f,g)(x).
\end{align*}
Then combine with the above inequality and (\ref{bdd of T_{*,b,i}}), we have
\begin{align}\label{b^e-b}
\|T_{*,b,1}(f,g)-T_{*,b^\epsilon,1}(f,g)\|_r\leq\|T_{*,b-b^\epsilon,1}(f,g)\|_r\leq C\|b-b^\epsilon\|_{BMO}\|f\|_p\|g\|_q\leq C\epsilon.
\end{align}
 Denote $F_1:=\{T_{*,b^\epsilon,1}(f,g): f\in B_1,\,g\in B_2\}$, then $(a)$, $(b)$ and $(c)$ in Lemma \ref{Frechet Kolmogolov thm} hold for $F_1$. We need to show that $(a)$, $(b)$ and $(c)$ also hold for the set $\widetilde{F}_1:=\{T_{*,b,1}(f,g): f\in B_1,\,g\in B_2\}$. (\ref{b^e-b}) gives that
\begin{align}\label{a0}
\sup_{f\in B_1,g\in B_2}\|T_{*,b,1}(f,g)\|_r\leq\sup_{f\in B_1,g\in B_2}\|T_{*,b^\epsilon,1}(f,g)\|_r+C\epsilon<\infty.
\end{align}On the other hand,
\begin{align}\label{b0}
\lim_{\alpha\rightarrow\infty}\|T_{*,b,1}(f,g)\chi_{E_\alpha}\|_r\leq&\lim_{\alpha\rightarrow\infty}\|T_{*,b^\epsilon,1}(f,g)\chi_{E_\alpha}\|_r+\|T_{*,b-b^\epsilon,1}(f,g)\|_r\\
\leq& C\epsilon\rightarrow 0,\quad (\epsilon\rightarrow0).\nonumber
\end{align}
\begin{align}\label{c0}
&\lim_{|h|\rightarrow0}\|T_{*,b,1}(f,g)(\cdot+h)-T_{*,b,1}(f,g)(\cdot)\|_r\\
\leq&\lim_{|h|\rightarrow0}\|T_{*,b^\epsilon,1}(f,g)(\cdot+h)-T_{*,b^\epsilon,1}(f,g)(\cdot)\|_r+2\|T_{*,b-b^\epsilon,1}(f,g)\|_r\nonumber\\
\leq& C\epsilon\rightarrow 0,\quad (\epsilon\rightarrow0).\nonumber
\end{align}It is obvious to see that the above limits hold uniformly in $\widetilde{F}_1$. Therefore, we know $\widetilde{F}_1$ is strongly pre-compact in $L^r(\R^n)$ for $b\in CMO$. Thus, to prove Theorem \ref{thm compact of maximal singular integral 1}, it suffices to verify that the set $F_1:=\{T_{*,b,1}(f,g): f\in B_1,\,g\in B_2\}$ is strongly pre-compact in $L^r(\R^n)$ for $b\in C_c^\infty(\R^n)$. By Lemma \ref{Frechet Kolmogolov thm}, we need only to prove  $(a)$, $(b)$ and $(c)$ hold uniformly in $F_1$.

For $(a)$, by (\ref{bdd of T_{*,b,i}}), we easily obtain
\begin{align}\label{a1 one commutator}
\sup_{f\in B_1,g\in B_2}\|T_{*,b,1}(f,g)\|_r\leq C\|b\|_{BMO} \sup_{f\in B_1,g\in B_2}\|f\|_p\|g\|_q<C<\infty.
\end{align}
Notice $b\in C_c^\infty(\R^n)$, without loss of generality, we can assume that $\supp b\subset\{x\in\R^n:|x|\leq \beta\}$ with $\beta>1$. For any $\epsilon>0$, $0<s<\frac{n}{q}$, we take $\alpha>2\beta$ such that $\alpha^{-n-s+n/r}<\epsilon$, then we have
\begin{align}\label{b1 one commutator}
\|T_{*,b,1}(f,g)\chi_{E_\alpha}\|_r<C\epsilon.
\end{align}
In fact, combine with the support set of $b$ and notice that $K\in2$-$CZK(A,\gamma)$, for $|x|>\alpha$, H\"{o}lder's inequality gives
\begin{align*}
T_{*,b,1}(f,g)(x)\leq& C\int_{|y|\leq\beta}  \frac{|b(y)|}{|x-y|^{n+s}}|f(y)|\int_{\R^n}\frac{|g(z)|}{(|x-y|+|x-z|)^{n-s}}\;dzdy\\
\leq&C\|b\|_\infty\int_{|y|\leq\beta}  \frac{|f(y)|}{|x-y|^{n+s}}\;dy\bigg(\int_{\R^n}\frac{1}{(1+|x-z|)^{(n-s)q^\prime}}\;dz\bigg)^{1/q^\prime}\|g\|_q\\
\leq&C \beta^{\frac{n}{p'}}|x|^{-n-s} \|f\|_p\|g\|_q\leq C|x|^{-n-s}.
\end{align*}
Thus, we have
\begin{align*}
\bigg(\int_{\R^n}|T_{*,b,1}(f,g)(x)|^r\chi_{E_\alpha}(x)\;dx \bigg)^{1/r}\leq C\bigg(\int_{|x|>\alpha}|x|^{-(n+s)r}\;dx\bigg)^{\frac{1}{r}}
\leq C\epsilon.
\end{align*}
That is, (\ref{b1 one commutator}) holds uniformly for $f\in B_1,g\in B_2$.

It remains to prove that $(c)$ holds also for $T_{*,b,1}(f,g)$ uniformly with $f\in B_1,g\in B_2$. That is, we need to verify that for any $0<\epsilon<\frac{1}{4}$, if $|h|$ is sufficiently small and dependent only on $\epsilon$, then
\begin{align}\label{c1-1}
\|T_{*,b,1}(f,g)(\cdot+h)-T_{*,b,1}(f,g)(\cdot)\|_r<C\epsilon
\end{align} holds uniformly for $f\in B_1,g\in B_2$.

In fact, for any $h\in \R^n$, we denote $\widetilde{K}_\delta(x,y,z)=K(x,y,z)\chi_{|x-y|+|x-z|>\delta}$, then
\begin{align}\label{decomposition of T*b1}
|&T_{*,b,1}(f,g)(x+h)-T_{*,b,1}(f,g)(x)|\\
\leq&\sup_{\delta>0}\bigg|\int_{\R^n}\int_{\R^n}\widetilde{K}_\delta(x+h,y,z)(b(y)-b(x+h))f(y)g(z)dydz\nonumber\\
&-\int_{\R^n}\int_{\R^n}\widetilde{K}_\delta(x,y,z)(b(y)-b(x))f(y)g(z)dydz\bigg|.\nonumber
\end{align}
We can control the right hand side of the above inequality by the sum of the following four terms:
\begin{align*}
J_1:=\sup_{\delta>0}\bigg|\iint_{|x-y|+|x-z|>\epsilon^{-1}|h|}\widetilde{K}_\delta(x,y,z)(b(x)-b(x+h))f(y)g(z)dydz\bigg|,
\end{align*}
\begin{align*}
J_2:=\sup_{\delta>0}\bigg|\iint_{|x-y|+|x-z|>\epsilon^{-1}|h|}(\widetilde{K}_\delta(x+h,y,z)-\widetilde{K}_\delta(x,y,z))(b(y)-b(x+h))f(y)g(z)dydz\bigg|,
\end{align*}
\begin{align*}
J_3:=\sup_{\delta>0}\bigg|\iint_{|x-y|+|x-z|\leq\epsilon^{-1}|h|}\widetilde{K}_\delta(x,y,z)(b(y)-b(x))f(y)g(z)dydz\bigg|,
\end{align*}
\begin{align*}
J_4:=\sup_{\delta>0}\bigg|\iint_{|x-y|+|x-z|\leq\epsilon^{-1}|h|}\widetilde{K}_\delta(x+h,y,z)(b(y)-b(x+h))f(y)g(z)dydz\bigg|.
\end{align*}
We will give the estimates for $J_1,\,J_2,\,J_3,\,J_4$, respectively in the following.\\
\text{\it\bf {Estimate for $J_1$.}} It is easy to see that
\begin{align*}
J_1\leq &|b(x+h)-b(x)|\sup_{\delta>0}\bigg|\iint_{|x-y|+|x-z|>\epsilon^{-1}|h|} \widetilde{K}_\delta(x,y,z)f(y)g(z) \;dydz\bigg|\\
\leq&|h|\|\nabla b\|_\infty \sup_{\delta>0}
\bigg|\iint_{\gfz{|x-y|+|x-z|> \epsilon^{-1}|h|}{|x-y|+|x-z|> \delta}}
 K(x,y,z)f(y)g(z) \;dydz\bigg|\\
\leq&C|h|T_*(f,g)(x).
\end{align*}
Applying (\ref{bdd of T_*}), we obtain
\begin{align}\label{J_1}
\|J_1\|_r\leq C|h|.
\end{align}
\text{\it\bf {Estimate for $J_2$.}} Notice that
\begin{align*}
J_2\leq&\sup_{\delta>0}\bigg|\iint_{|x-y|+|x-z|>\epsilon^{-1}|h|}(K(x+h,y,z)-K(x,y,z))\chi_{|x+h-y|+|x+h-z|>\delta}\\
&\times(b(y)-b(x+h))f(y)g(z)dydz\bigg|\\
&+\sup_{\delta>0}\bigg|\iint_{|x-y|+|x-z|>\epsilon^{-1}|h|}K(x,y,z)(\chi_{|x+h-y|+|x+h-z|>\delta}-\chi_{|x-y|+|x-z|>\delta})\\
&\times(b(y)-b(x+h))f(y)g(z)dydz\bigg|\\
=&:J_{21}+J_{22}.
\end{align*}
Observe that if $|x-y|+|x-z|>\epsilon^{-1}|h|$ and $0<\epsilon<1/4$, then $|h|\leq \frac{1}{2}\max\{|x-y|,|x-z|,|y-z|\}$. Since $K\in 2$-$CZK(A,\gamma)$, we have
\begin{align*}
J_{21}\leq C\|b\|_\infty\iint_{|x-y|+|x-z|>\epsilon^{-1}|h|} \frac{|h|^\gamma}{(|x-y|+|x-z|)^{2n+\gamma}} |f(y)g(z)|\;dydz.
\end{align*}
Minkowski's inequality and H\"{o}lder's inequality give that
\begin{align*}
\|J_{21}\|_r&\leq C\iint_{|y|+|z|>\epsilon^{-1}|h|} \frac{|h|^\gamma}{(|y|+|z|)^{2n+\gamma}}\left(\int_{\R^n}|f(x-y)g(x-z)|^rdx\right)^{\frac{1}{r}} dydz\leq C\epsilon^\gamma.
\end{align*}
For $J_{22}$, it is easy to see that
\begin{align*}
J_{22}\leq&\sup_{\delta>0}\bigg|\iint_{\gfz{|x-y|+|x-z|>\epsilon^{-1}|h|}
{\gfz {|x+h-y|+|x+h-z|>\delta}{|x-y|+|x-z|\leq\delta}}}
K(x,y,z)(b(y)-b(x+h))f(y)g(z)dydz\bigg|\\
&+\sup_{\delta>0}\bigg|\iint_{\gfz{|x-y|+|x-z|>\epsilon^{-1}|h|}
{\gfz {|x+h-y|+|x+h-z|\leq\delta}{|x-y|+|x-z|>\delta}}}
K(x,y,z)(b(y)-b(x+h))f(y)g(z)dydz\bigg|\\
=&:J_{221}+J_{222}.
\end{align*}
For $J_{221}$, as $|x-y|+|x-z|>\epsilon^{-1}|h|$, $|x+h-y|+|x+h-z|>\delta$ and $0<\epsilon<1/4$, then $|x-y|+|x-z|\geq \frac{1}{2\epsilon+1}(|x+h-y|+|x+h-z|)\geq \frac{\delta}{2\epsilon+1}$. Then for any $1<r_0<\min\{p,q\}$, H\"{o}lder's inequality and (\ref{estimate 1}) give that
\begin{align*}
J_{221}\leq& C\|b\|_\infty\sup_{\delta>0}\iint_{\frac{\delta}{2\epsilon+1}\leq |x-y|+|x-z|\leq \delta}\frac{|f(y)g(z)|}{(|x-y|+|x-z|)^{2n}}dydz\\
\leq& C\sup_{\delta>0}\bigg(\iint_{\frac{\delta}{2\epsilon+1}\leq |y|+|z|\leq \delta}\frac{|f(x-y)g(x-z)|^{r_0}}{(|y|+|z|)^{2n}}\;dydz\bigg)^{\frac{1}{r_0}}\\
&\times\sup_{\delta>0}\bigg(\iint_{\frac{\delta}{2\epsilon+1}\leq |y|+|z|\leq \delta}\frac{dydz}{(|y|+|z|)^{2n}}\bigg)^{\frac{1}{r_0'}}\\
\leq& C\sup_{\delta>0}\bigg((2\epsilon+1)^{2n}\delta^{-2n}\iint_{|y|+|z|\leq \delta}|f(x-y)g(x-z)|^{r_0}\;dydz\bigg)^{\frac{1}{r_0}}[1-(2\epsilon+1)^{-n}]^{\frac{1}{r_0'}}\\
\leq&C\epsilon^{\frac{1}{r_0'}}M(|f|^{r_0})(x)^{\frac{1}{r_0}}M(|g|^{r_0})(x)^{\frac{1}{r_0}}.
\end{align*}Notice that $\frac{1}{r}=\frac{1}{p}+\frac{1}{q}$, and $\frac{p}{r_0},\frac{q}{r_0}>1$. Thus, H\"{o}lder's inequality and the $L^p$-boundedness of Hardy-Littlewood maximal operator $M$ give
\begin{align*}
\|J_{221}\|_r\leq& C\epsilon^{\frac{1}{r_0'}}\|M(|f|^{r_0})^{\frac{1}{r_0}}M(|g|^{r_0})^{\frac{1}{r_0}}\|_r\leq C\epsilon^{\frac{1}{r_0'}}\|M(|f|^{r_0})^{\frac{1}{r_0}}\|_p\|M(|g|^{r_0})^{\frac{1}{r_0}}\|_q\\
\leq & C\epsilon^{\frac{1}{r_0'}}\||f|^{r_0}\|_{p/r_0}^{\frac{1}{r_0}}\||g|^{r_0}\|_{q/r_0}^{\frac{1}{r_0}}\leq C\epsilon.
\end{align*}
The estimate for $J_{222}$ is completely similar. It is easy to see that as $|x-y|+|x-z|>\epsilon^{-1}|h|$, $|x+h-y|+|x+h-z|\leq\delta$ and $0<\epsilon<1/4$, then $|x-y|+|x-z|\leq \frac{1}{1-2\epsilon}(|x+h-y|+|x+h-z|)\leq \frac{\delta}{1-2\epsilon}$. Therefore, analogous to $J_{221}$, using H\"{o}lder's inequality and (\ref{estimate 2}), then for any $1<r_0<\min\{p,q\}$,
\begin{align*}
J_{222}\leq& C\|b\|_\infty\sup_{\delta>0}\iint_{\delta \leq |x-y|+|x-z|\leq \frac{\delta}{1-2\epsilon}}\frac{|f(y)g(z)|}{(|x-y|+|x-z|)^{2n}}\;dydz\\
\leq& CM(|f|^{r_0})(x)^{\frac{1}{r_0}}M(|g|^{r_0})(x)^{\frac{1}{r_0}} [(1-2\epsilon)^{-n}-1]^{\frac{1}{r_0'}}\\
\leq&C\epsilon^{\frac{1}{r_0'}}M(|f|^{r_0})(x)^{\frac{1}{r_0}}M(|g|^{r_0})(x)^{\frac{1}{r_0}}.
\end{align*}
Thus,  we also can obtain $$\|J_{222}\|_r\leq C\epsilon.$$
Combine with the estimates for $J_{21}$, $J_{221}$ and $J_{222}$, then
\begin{align}\label{J_2}
\|J_2\|_r\leq C\epsilon.
\end{align}
\text{\it\bf {Estimate for $J_3$.}} Note that, $|b(x)-b(y)|\leq \|\nabla b\|_\infty|x-y|$ and $K\in 2$-$CZK(A,\gamma)$, we have
\begin{align*}
J_3\leq C\|\nabla b\|_\infty\iint_{|x-y|+|x-z|\leq \epsilon^{-1}|h|} \frac{|f(y)g(z)|}{{(|x-y|+|x-z|)^{2n-1}}}\;dydz.
\end{align*}
Therefore, Minkowski's inequality and H\"{o}lder's inequality give that
\begin{align}\label{J_3}
\|J_3\|_r\leq C\iint_{|y|+|z|\leq\epsilon^{-1}|h|} \left(\int_{\R^n}|f(x-y)g(x-z)|^r\;dx\right)^{\frac{1}{r}}\frac{dydz}{(|y|+|z|)^{2n-1}} \leq C\epsilon^{-1}|h|.
\end{align}
\text{\it\bf {Estimate for $J_4$.}} With the same way, we have
\begin{align}\label{J_4}
\|J_4\|_r\leq C(2+\epsilon^{-1})|h|.
\end{align}
Note that the constants $C$ in (\ref{J_1})-(\ref{J_4}) are independent of $h$ and $\epsilon$. Taking $|h|$ to be sufficiently small, we obtain (\ref{c1-1}). Therefore, \text{(c)} holds for $T_{*,b,1}(f,g)$ uniformly for $f\in B_1,g\in B_2$. We complete the proof.

\section{The proof of Theorem \ref{thm compact of maximal singular integral 2}} 
\setcounter{equation}{0}

\noindent
\text{\it {Proof of Theorem \ref{thm compact of maximal singular integral 2}.}}
$T_{\ast,(b_1,b_2)}$ is bounded from $L^p(\R^n)\times L^q(\R^n)$ to $L^r(\R^n)$ by (\ref{bdd of T_{*,(b_1,b_2)}}). Let $B_1$, $B_2$ be unit balls in $L^p(\R^n)$ and $L^q(\R^n)$, respectively. We need to prove that the set $G=\{T_{\ast,(b_1,b_2)}(f,g):f\in B_1,g\in B_2\}$ is strongly pre-compact in $L^r(\R^n)$. Notice that
\begin{align*}\label{approximate VMO}
|&T_{*,(b_1,b_2)}(f,g)(x)-T_{*,(b_1^\epsilon,b_2^\epsilon)}(f,g)(x)|\\
\leq&\sup_{\delta>0}\bigg|\iint_{|x-y|+|x-z|>\delta} \bigg[(b_1(y)-b_1(x))(b_2(z)-b_2(x))-(b_1^\epsilon(y)-b_1^\epsilon(x))(b_2^\epsilon(z)-b_2^\epsilon(x)) \bigg]\nonumber\\
&\times K(x,y,z)f(y)g(z)\;dydz\bigg|\nonumber\\
\leq &T_{*,(b_1-b_1^\epsilon,b_2)}(f,g)(x)+T_{*,(b_1^\epsilon,b_2-b_2^\epsilon)}(f,g)(x).\nonumber
\end{align*} Therefore, if $b_j^\epsilon\in C_c^\infty(\R^n)$, such that $\|b_j-b_j^\epsilon\|_{BMO}<\epsilon$ ($j=1,2$), then (\ref{bdd of T_{*,(b_1,b_2)}}) gives that
\begin{align}
\|&T_{*,(b_1,b_2)}(f,g)-T_{*,(b_1^\epsilon,b_2^\epsilon)}(f,g)\|_r\leq \|T_{*,(b_1-b_1^\epsilon,b_2)}(f,g)\|_r+\|T_{*,(b_1^\epsilon,b_2-b_2^\epsilon)}(f,g)\|_r\\
\leq & C(\|b_2\|_{BMO}\|b_1-b_1^\epsilon\|_{BMO}+\|b_1^\epsilon\|_{BMO}\|b_2-b_2^\epsilon\|_{BMO})\|f\|_p\|g\|_q\leq C\epsilon.\nonumber
\end{align}Therefore, we only need to prove that $G$ is strongly pre-compact in $L^r(\R^n)$ for $b_j\in C_c^\infty(\R^n)$.
By Lemma \ref{Frechet Kolmogolov thm}, we only need to show that $(a)$, $(b)$ and $(c)$ in Lemma \ref{Frechet Kolmogolov thm} hold for $T_{*,(b_1,b_2)}(f,g)$ uniformly in $G$ with $b_1,b_2\in C_c^\infty(\R^n)$.

By (\ref{bdd of T_{*,(b_1,b_2)}}), we easily obtain that $(a)$ holds for $T_{*,(b_1,b_2)}(f,g)$ uniformly with $f\in B_1,g\in B_2$.  Notice $b_j\in C_c^\infty(\R^n)$, without loss of generality, we can also assume that $\supp b_j\subset\{x\in\R^n:|x|\leq \beta\}$ with $\beta>1$. For any $\epsilon>0$, we take $\alpha>2\beta$ such that $(\alpha-\beta)^{-2n+n/r}<\epsilon$, then we have
\begin{align}\label{b1}
\bigg(&\int_{\R^n}|T_{*,(b_1,b_2)}(f,g)(x)|^r\chi_{E_\alpha}(x)\;dx \bigg)^{\frac{1}{r}}\\
\leq& C\bigg(\int_{E_\alpha}\bigg(\int_{B(0,\beta)}\int_{B(0,\beta)} \frac{|b_1(y)||b_2(z)|}{(|x-y|+|x-z|)^{2n}}|f(y)||g(z)|\;dydz\bigg)^r\;dx \bigg)^{\frac{1}{r}}\nonumber\\
\leq& C\|b_1\|_\infty\|b_2\|_\infty\int_{B(0,\beta)}\int_{B(0,\beta)} \bigg(\int_{|x|>\alpha-\beta} \frac{dx}{|x|^{2nr}}\bigg)^{1/r}|f(y)||g(z)|\;dydz\nonumber\\
\leq& C\|b_1\|_\infty\|b_2\|_\infty\beta^{n(2-1/r)}\epsilon,\nonumber
\end{align}
which shows that $(b)$ holds for $T_{*,(b_1,b_2)}(f,g)$ uniformly with $f\in B_1,g\in B_2$. It remains to prove that for any $0<\epsilon<1/4$, if $|h|$ is sufficiently small and dependent only on $\epsilon$, then
\begin{align}\label{c1}
\|T_{*,(b_1,b_2)}(f,g)(\cdot+h)-T_{*,(b_1,b_2)}(f,g)(\cdot)\|_r<C\epsilon
\end{align}holds uniformly for $f\in B_1,g\in B_2$.

In fact, for any $h\in \R^n$, we denote $\vec{b}(x,y,z)=(b_1(y)-b_1(x))(b_2(z)-b_2(x))$ and $\vec{b}_h(x,y,z)=\vec{b}(x+h,y,z)-\vec{b}(x,y,z)$, using the notation in the proof of Theorem \ref{thm compact of maximal singular integral 1}, we have
\begin{align*}
|&T_{*,(b_1,b_2)}(f,g)(x+h)-T_{*,(b_1,b_2)}(f,g)(x)|\\
\leq&\sup_{\delta>0}\bigg|\int_{\R^n}\int_{\R^n}\widetilde{K}_\delta(x+h,y,z)\vec{b}(x+h,y,z) f(y)g(z)dydz\\
&-\int_{\R^n}\int_{\R^n}\widetilde{K}_\delta(x,y,z)\vec{b}(x,y,z)f(y)g(z)dydz\bigg|.
\end{align*}Similar to the decomposition for (\ref{decomposition of T*b1}), we can control the right hand side of the above inequality by $L_1+L_2+L_3+L_4$, where
\begin{align*}
L_1:=\sup_{\delta>0}\bigg|\iint_{|x-y|+|x-z|>\epsilon^{-1}|h|}\widetilde{K}_\delta(x,y,z)\vec{b}_h(x,y,z)f(y)g(z)dydz\bigg|,
\end{align*}
\begin{align*}
L_2:=&\sup_{\delta>0}\bigg|\iint_{|x-y|+|x-z|>\epsilon^{-1}|h|}(\widetilde{K}_\delta(x+h,y,z)-\widetilde{K}_\delta(x,y,z))\vec{b}(x+h,y,z)f(y)g(z)dydz\bigg|,
\end{align*}
\begin{align*}
L_3:=\sup_{\delta>0}\bigg|\iint_{|x-y|+|x-z|\leq\epsilon^{-1}|h|}\widetilde{K}_\delta(x,y,z)\vec{b}(x,y,z)f(y)g(z)dydz\bigg|,
\end{align*}
\begin{align*}
L_4:=&\sup_{\delta>0}\bigg|\iint_{|x-y|+|x-z|\leq\epsilon^{-1}|h|}\widetilde{K}_\delta(x+h,y,z)\vec{b}(x+h,y,z)f(y)g(z)dydz\bigg|.
\end{align*}
In the following, we will give the estimates for $L_1,\,L_2,\,L_3,\,L_4$, respectively.\\
\text{\it\bf {Estimate for $L_1$.}}
Observe
\begin{align}\label{decomposition of b}
&\vec{b}_h(x,y,z)
=(b_1(x)-b_1(x+h))(b_2(x)-b_2(x+h))\\
+&(b_1(x)-b_1(x+h))(b_2(z)-b_2(x))+(b_1(y)-b_1(x))(b_2(x)-b_2(x+h))\nonumber.
\end{align}
Note that $|b_j(x)-b_j(x+h)|\leq |h|\|\nabla b_j\|_\infty$, then we have
\begin{align*}
L_1\leq &|h|^2\|\nabla b_1\|_\infty\|\nabla b_2\|_\infty\sup_{\delta>0}\bigg|\iint_{|x-y|+|x-z|> \epsilon^{-1}|h|} \widetilde{K}_\delta(x,y,z) f(y)g(z)\;dydz\bigg|\nonumber\\
&+|h|\|\nabla b_1\|_\infty\sup_{\delta>0}\bigg|\iint_{|x-y|+|x-z|> \epsilon^{-1}|h|} \widetilde{K}_\delta(x,y,z) (b_2(z)-b_2(x)) f(y)g(z)\;dydz\bigg|\\
&+|h|\|\nabla b_2\|_\infty\sup_{\delta>0}\bigg|\iint_{|x-y|+|x-z|> \epsilon^{-1}|h|} \widetilde{K}_\delta(x,y,z) (b_1(y)-b_1(x)) f(y)g(z)\;dydz\bigg|\\
\leq& C|h|^2 T_\ast(f,g)(x)+C|h|T_{*,b_2,2}(f,g)(x)+C|h|T_{*,b_1,1}(f,g)(x).
\end{align*}
Then (\ref{bdd of T_*}) and (\ref{bdd of T_{*,b,i}}) give that
\begin{align}\label{L_1}
\|L_1\|_r\leq C(|h|^2+|h|\|b_1\|_{BMO}+|h|\|b_2\|_{BMO})\|f\|_p\|g\|_q\leq C(|h|+|h|^2).
\end{align}
\text{\it\bf {Estimate for $L_2$.}} The estimate for $L_2$ is similar to $J_2$. We have
\begin{align*}
L_2\leq&\sup_{\delta>0}\bigg|\iint_{|x-y|+|x-z|>\epsilon^{-1}|h|}(K(x+h,y,z)-K(x,y,z))\chi_{|x+h-y|+|x+h-z|>\delta}\\
&\times\vec{b}(x+h,y,z)f(y)g(z)dydz\bigg|\\
+&\sup_{\delta>0}\bigg|\iint_{\gfz{|x-y|+|x-z|>\epsilon^{-1}|h|}
{\gfz {|x+h-y|+|x+h-z|>\delta}{|x-y|+|x-z|\leq\delta}}}
K(x,y,z)\vec{b}(x+h,y,z)f(y)g(z)dydz\bigg|\\
+&\sup_{\delta>0}\bigg|\iint_{\gfz{|x-y|+|x-z|>\epsilon^{-1}|h|}
{\gfz {|x+h-y|+|x+h-z|\leq\delta}{|x-y|+|x-z|>\delta}}}
K(x,y,z)\vec{b}(x+h,y,z)f(y)g(z)dydz\bigg|\\
=&:L_{21}+L_{22}+L_{23}.
\end{align*}The estimates for $L_{21},\,L_{22},\,L_{23}$ are completely analogous to $J_{21},\,J_{221},\,J_{222}$. Then,
\begin{align}\label{L_2}
\|L_2\|_r\leq C\epsilon.
\end{align}
\text{\it\bf {Estimate for $L_3$.}}
Note that, $|b_j(x)-b_j(y)|\leq \|\nabla b_j\|_\infty|x-y|$ $(j=1,2)$ and $K\in 2$-$CZK(A,\gamma)$, we have
\begin{align*}
L_3\leq C\|\nabla b_1\|_\infty\|\nabla b_2\|_\infty\iint_{|x-y|+|x-z|\leq \epsilon^{-1}|h|} \frac{ |f(y)g(z)|}{{(|x-y|+|x-z|)^{2n-2}}}\;dydz.
\end{align*}
Therefore, by Minkowski's inequality and H\"{o}lder's inequality, we have
\begin{align}\label{L_3}
\|L_3\|_r\leq &C\|\nabla b_1\|_\infty\|\nabla b_2\|_\infty\iint_{|y|+|z|\leq \epsilon^{-1}|h|} \bigg(\int_{\R^n}|f(x-y)g(x-z)|^rdx\bigg)^{\frac{1}{r}}\frac{dydz}{{(|y|+|z|)^{2n-2}}}\\ \leq& C(\epsilon^{-1}|h|)^2\|f\|_p\|g\|_q\leq C(\epsilon^{-1}|h|)^2.\nonumber
\end{align}
\text{\it\bf {Estimate for $M_4$.}} With the same way, we have
\begin{align}\label{L_4}
\|L_4\|_r\leq C(2+\epsilon^{-1})^2|h|^2.
\end{align}
Note that the constants $C$ in (\ref{L_1})-(\ref{L_4}) are independent of $h$ and $\epsilon$. Taking $|h|<(2+\epsilon^{-1})^{-1}\epsilon$, we obtain (\ref{c1}). Therefore, $(c)$ holds for $T_{*,(b_1,b_2)}(f,g)$ uniformly for $f\in B_1,g\in B_2$. We complete the proof.

\end{document}